\def\N{\mathbb{N}}
\def\L{\mathcal{L}}
\def\A{\mathcal{A}}
\DeclareMathOperator{\AG}{AG}
\theoremstyle{definition}
\newtheorem{theorem}{Theorem}[section]
\newtheorem{lemma}[theorem]{Lemma}
\newtheorem{definition}[theorem]{Definition}
\newtheorem{corollary}[theorem]{Corollary}
\newtheorem*{maintheorem}{Main Theorem}
\newcommand{\comments}[1]{}
\author{Maarten De Boeck \and Geertrui Van de Voorde}
\title{A new lower bound for the size of an affine blocking set}
\date{}
\begin{document}
\maketitle

\begin{abstract} A blocking set in an affine plane is a set of points $B$ such that every line contains at least one point of $B$. The best known lower bound for blocking sets in arbitrary (non-desarguesian) affine planes was derived in the 1980's by Bruen and Silverman \cite{bsil}. In this note, we improve on this result by showing that a blocking set of an affine plane of order $q$, $q\geq 25$, contains at least $q+\lfloor\sqrt{q}\rfloor+3$ points.

\end{abstract}

\paragraph*{Keywords:} blocking set, affine blocking set
\paragraph*{MSC 2010 codes:} 51E21

\section{Introduction and preliminaries}

\begin{definition}
	A blocking set in an affine plane $\A$ (or an {\em affine blocking set}) is a point set $B$ such that every line of $\A$ contains at least one point of $B$.
\end{definition}

In general, little is known about the smallest affine blocking sets, especially when compared to the knowledge about small blocking sets in projective planes. For the classical (desarguesian) affine plane $\AG(2,q)$ we have the following result, obtained independently by Jamison and by Brouwer and Schrijver.

\begin{theorem}[\cite{bs} and \cite{jam}]\label{jamison}
	A blocking set of $\AG(2,q)$ has size at least $2q-1$.
\end{theorem}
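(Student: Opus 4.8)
The plan is to use the polynomial method in the projectively dualised setting. Identify $\AG(2,q)$ with $\F_q^2$, so that the blocking set is $B=\{(a_i,b_i):1\le i\le N\}$ with $N=|B|$, and encode the whole configuration in the dual R\'edei polynomial
\[
G(u,v,w)=\prod_{i=1}^{N}(a_iu+b_iv+w),
\]
a \emph{homogeneous} polynomial of degree $N$ in the dual coordinates $(u,v,w)$. An affine line $ux+vy=w'$ corresponds to the triple $(u,v,-w')$ with $(u,v)\neq(0,0)$, and it is met by $B$ precisely when one of the factors of $G$ vanishes there. Hence ``$B$ is a blocking set'' is equivalent to the single functional condition that $G(u,v,w)=0$ for every $(u,v,w)\in\F_q^3$ with $(u,v)\neq(0,0)$. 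Since $G(0,0,w)=w^N$, the polynomial $G$ therefore represents, as a function on $\F_q^3$, exactly the same map as $(u^{q-1}-1)(v^{q-1}-1)\,w^{N}$: both vanish everywhere except on the line $u=v=0$, where they equal $w^N$.

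The second step exploits that two polynomials agreeing as functions on $\F_q^3$ have the same reduction modulo the ideal $(u^q-u,\,v^q-v,\,w^q-w)$. Writing $\overline{(\cdot)}$ for the reduced representative (all exponents at most $q-1$), I would conclude
\[
\overline{G}=\overline{(u^{q-1}-1)(v^{q-1}-1)\,w^{N}}.
\]
On the right-hand side, expanding and reducing $w^N$ to $w^{\rho}$ with $1\le\rho\le q-1$ (legitimate since $N\ge1$), the monomial $u^{q-1}v^{q-1}w^{\rho}$ occurs with coefficient $1$. The heart of the argument is then to show that the left-hand side cannot supply this monomial when $N$ is small.

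For the final step, suppose for contradiction that $N\le 2q-2$. As $G$ is homogeneous of degree $N$, every monomial of $G$ has total degree exactly $N$, and reduction modulo the relations $x^q-x$ can only decrease total degree; hence every monomial of $\overline{G}$ has total degree at most $N\le 2q-2$. But $u^{q-1}v^{q-1}w^{\rho}$ has degree $2(q-1)+\rho\ge 2q-1$, so its coefficient in $\overline{G}$ is $0$, contradicting the identity above, where it is $1$. Therefore $N\ge 2q-1$. I expect the main obstacle to be the clean translation of the combinatorial blocking condition into the exact polynomial identity --- in particular checking that the only points where $G$ fails to vanish are those of the excluded dual point $u=v=0$ (the line at infinity) --- together with the bookkeeping ensuring that the critical monomial $u^{q-1}v^{q-1}w^{\rho}$ is neither created nor cancelled during reduction; the degree count that finishes the proof is then routine. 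This is the Combinatorial Nullstellensatz argument, and an essentially equivalent formulation records that for each direction $(u,v)\neq(0,0)$ the $N$ values $a_iu+b_iv$ must cover all of $\F_q$.
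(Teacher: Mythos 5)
Your argument is correct. Note first that the paper does not prove Theorem \ref{jamison} at all --- it is quoted from Brouwer--Schrijver \cite{bs} and Jamison \cite{jam} --- so there is no in-paper proof to compare against; what you have written is essentially the classical polynomial argument of Brouwer and Schrijver, in its modern ``reduce modulo $(u^q-u,v^q-v,w^q-w)$'' formulation. All the steps you flagged as potential obstacles do go through: the blocking condition is exactly the statement that $G(u,v,w)=\prod_i(a_iu+b_iv+w)$ vanishes at every $(u,v,w)$ with $(u,v)\neq(0,0)$ (the line $ux+vy=w'$ corresponds to the zero of the $i$-th factor at $(u,v,-w')$, and $w=-w'$ ranges over all of $\F_q$), while $G(0,0,w)=w^N$, so $G$ and $(u^{q-1}-1)(v^{q-1}-1)w^N$ agree as functions and hence have equal reduced forms. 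On the right-hand side the four monomials $\pm u^{q-1}v^{q-1}w^{\rho}$, $\mp u^{q-1}w^{\rho}$, $\mp v^{q-1}w^{\rho}$, $\pm w^{\rho}$ are distinct after reduction, so no cancellation can kill the coefficient $1$ of $u^{q-1}v^{q-1}w^{\rho}$; and since $N\geq 1$ forces $\rho\geq 1$, that monomial has total degree at least $2q-1$, whereas reduction of the homogeneous degree-$N$ polynomial $G$ only lowers total degrees, so $N\leq 2q-2$ is impossible. The one hypothesis you use implicitly is that $B$ is nonempty (to get $\rho\geq1$), which is automatic. The proof is complete and self-contained.
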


Note that this bound is also valid for affine blocking sets (so, in arbitrary affine planes) containing a line. The bound from Theorem \ref{jamison} cannot be generalised to arbitrary affine planes; in \cite{db}, it is shown that there are affine planes of order $n$ admitting blocking sets of size at most $\frac{4}{3}n+\frac{5}{3}\sqrt{n}$.

For arbitrary affine planes we have the following result by Bruen and Thas.

\begin{theorem}[{\cite[Cor. 2]{bt}}]
	If $\mathcal{S}$ is a blocking set of an axiomatic affine plane of order $q$, then
	\[
		|\mathcal{S}|\geq q+1+\frac{\sqrt{4q^{3}-4q+1}-1}{2q}\;.
	\]
	If $q$ is a square, this implies $|\mathcal{S}|\geq q+\sqrt{q}+1$.
\end{theorem}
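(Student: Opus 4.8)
My plan is to reduce at once to the case where $\mathcal{S}$ contains no line and then run a double count that is governed by a single structural inequality on the intersection numbers. If $\mathcal{S}$ contains a line, the bound quoted after Theorem~\ref{jamison} gives $|\mathcal{S}|\geq 2q-1$, which already exceeds the claimed value for all relevant $q$; so I may assume that every line of the plane carries a point outside $\mathcal{S}$. Writing $b=|\mathcal{S}|$ and $w(\ell)=|\ell\cap\mathcal{S}|$, the whole argument rests on two ingredients: a ceiling $w(\ell)\leq b-q$ valid for every line, and the two standard counting identities for $\sum_\ell w(\ell)$ and $\sum_\ell\binom{w(\ell)}{2}$.

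The first key step is the structural lemma that $w(\ell)\leq b-q$ for every line $\ell$. To prove it I fix $\ell$ and a point $P\in\ell\setminus\mathcal{S}$ (which exists after the reduction). The $q$ lines through $P$ other than $\ell$ meet $\ell$ only in $P$, they partition the $q^{2}-q$ points off $\ell$, and each of them must be blocked by a point of $\mathcal{S}$ lying off $\ell$; since distinct lines through $P$ share only $P\notin\mathcal{S}$, these blocking points are pairwise distinct. Hence $|\mathcal{S}\setminus\ell|\geq q$, i.e. $w(\ell)=b-|\mathcal{S}\setminus\ell|\leq b-q$. This is the only place where the affine (resolvable) structure is genuinely used, and it is the step I expect to be the crux: plain moment estimates yield nothing better than $b\geq q+1$, and it is precisely this upper ceiling on the intersection numbers, absent in the projective setting, that breaks the symmetry and makes the count bite.

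With the ceiling in hand the rest is bookkeeping. Counting incident point--line pairs and collinear point--pairs gives the exact identities $\sum_\ell w(\ell)=b(q+1)$ and $\sum_\ell\binom{w(\ell)}{2}=\binom{b}{2}$ over the $q^{2}+q$ lines, whence $\sum_\ell w(\ell)^{2}=b(b+q)$. Now I exploit that every line satisfies $1\leq w(\ell)\leq b-q$ through the single inequality $\sum_\ell\bigl(w(\ell)-1\bigr)\bigl(w(\ell)-(b-q)\bigr)\leq 0$, each summand being a nonnegative factor times a nonpositive one. Expanding the left-hand side with the three moments collapses it to $b(b-1)\leq(q+1)(b-q)^{2}$. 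Setting $e=b-q-1$ this reads exactly $qe^{2}+e\geq q^{2}-1$, equivalently $qb^{2}-(2q^{2}+2q-1)b+q^{2}(q+1)\geq 0$; the smaller root of this upward quadratic lies below $q$ while $b\geq q+1$, so $b$ must lie beyond the larger root, which is precisely $q+1+\frac{\sqrt{4q^{3}-4q+1}-1}{2q}$. For the square case I would finish by observing that $e=b-q-1$ is an integer and that $e\leq\sqrt{q}-1$ would force $qe^{2}+e<q^{2}-1$; hence $e\geq\sqrt{q}$ and $b\geq q+\sqrt{q}+1$.
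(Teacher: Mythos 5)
The paper does not prove this theorem; it is quoted from Bruen--Thas \cite{bt}, so there is no in-paper argument to compare against. Your proof is correct in substance and is the standard double-count: the ceiling $w(\ell)\leq b-q$ (obtained by fanning the $q$ other lines through a point $P\in\ell\setminus\mathcal{S}$ and noting their blocking points are distinct and off $\ell$), the three moment identities, and the quadratic inequality $\sum_{\ell}(w(\ell)-1)\bigl(w(\ell)-(b-q)\bigr)\leq 0$; I checked that expanding it gives $qb^{2}-(2q^{2}+2q-1)b+q^{2}(q+1)\geq 0$, that the larger root is exactly $q+1+\frac{\sqrt{4q^{3}-4q+1}-1}{2q}$, that the smaller root is below $q<b$, and that the square case $q^{2}-1\leq qe^{2}+e$ with integral $e$ forces $e\geq\sqrt{q}$. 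It is worth noting that this is precisely the dual of the machinery the paper builds for its own main result: your ceiling $w(\ell)\leq b-q$ dualises to the paper's $|\mathcal{L}|\geq q+k$ in Lemma \ref{eerste}, and your quadratic sum is the dual of $\sum_{i}(i-1)(k-i)x_{i}\geq 0$ fed into Lemma \ref{Beq}.

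The one genuine weak point is the opening reduction. The claim that $2q-1$ ``exceeds the claimed value for all relevant $q$'' is false for $q\leq 3$: at $q=3$ the stated bound is $\approx 5.475$ while $2q-1=5$, and indeed a line of $\AG(2,3)$ plus one point on each of its two parallels is a $5$-point blocking set, so the theorem as literally stated (with the paper's Definition 1.1, which permits $\mathcal{S}$ to contain a line) fails at $q=2,3$. This reflects an imprecision in the quoted statement rather than in your argument --- Bruen and Thas work with blocking sets that contain no line --- but you should either restrict the reduction to $q\geq 4$ (where $2q-1$ really does dominate the bound) or state explicitly that the line-containing case is excluded by convention. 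Everything after the reduction is sound.
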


This result was improved by Bruen and Silverman.

\begin{theorem}[{\cite[Theorem 3.1]{bsil}}]\label{silverman}
	If $\mathcal{S}$ is a blocking set of an axiomatic affine plane of order $q>3$, 
	then
	\[
	|\mathcal{S}|>q+\sqrt{q}+1\;.
	\]
	If $q\geq4$ is a square, this implies $|\mathcal{S}|\geq q+\sqrt{q}+2$.
\end{theorem}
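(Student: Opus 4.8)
The plan is to bound $\mathcal{S}$ by counting \emph{tangent lines}, i.e.\ lines meeting $\mathcal{S}$ in exactly one point, and to turn these counts into a quadratic inequality for the excess $k:=|\mathcal{S}|-q$; the target $|\mathcal{S}|>q+\sqrt{q}+1$ is exactly $k>\sqrt{q}+1$. Two elementary observations start the method. Fixing a parallel class, its $q$ lines partition $\mathcal{S}$ into $q$ nonempty parts of total size $q+k$, so at least $q-k$ of them are tangent; summing over the $q+1$ parallel classes yields at least $(q+1)(q-k)$ tangent lines. Likewise, through any point $R\notin\mathcal{S}$ the $q+1$ lines partition $\mathcal{S}$ into $q+1$ nonempty parts, so at least $q-k+2$ of them are tangent. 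Already these force $k\geq 1$, but the sharp constant needs more.

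To beat the Bruen--Thas estimate I would localise at one tangent line. Fix a tangent line $\ell$, put $P=\ell\cap\mathcal{S}$, and note that the other $q-1$ points of $\ell$ are external. For such an external point $X$, every line through $X$ other than $\ell$ avoids $P$ and meets $\mathcal{S}$, so these $q$ lines split $\mathcal{S}\setminus\{P\}$ (of size $q+k-1$) into $q$ nonempty parts; hence at least $q-k+1$ of them are tangent. The key structural point is that the tangent lines through a fixed $X$ meet $\mathcal{S}$ in pairwise distinct points of $\mathcal{S}\setminus\{P\}$, and that tangent lines through distinct external points of $\ell$ are themselves distinct. I would therefore double count the tangent incidences between the $q-1$ external points of $\ell$ and the points of $\mathcal{S}\setminus\{P\}$: summing over $X$ gives at least $(q-1)(q-k+1)$, while summing over $S\in\mathcal{S}\setminus\{P\}$ records the same total through the numbers $\theta_S$ of tangent lines on each $S$.

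The sharp bound should then come from a second-moment (Cauchy--Schwarz / convexity) step. The $q+k-1$ points of $\mathcal{S}\setminus\{P\}$ must absorb a near-quadratic number of tangent incidences, whereas the secants emanating from the external points of $\ell$ can account for only about $(q-1)(k-1)$ of the missing ones; forcing these to balance produces a relation of the shape $k^{2}\geq q$ up to lower-order terms, equivalently $k>\sqrt{q}+1$. By contrast, writing $a_{i,j}$ for the size of the part cut on $\mathcal{S}$ by the $j$-th line of the $i$-th parallel class, the crude convexity bound that concentrates all the excess of each class on a single line, namely $\sum_{i,j}\binom{a_{i,j}-1}{2}\leq(q+1)\binom{k}{2}$ together with the identity $\sum_{i,j}\binom{a_{i,j}-1}{2}=\binom{q+k}{2}-(q+1)k$, yields only $k\geq\tfrac12\bigl(1+\sqrt{4q-3}\bigr)$, which is strictly below $\sqrt{q}+1$; this is precisely the slack that must be recovered.

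The main obstacle is therefore precision rather than strategy: the naive convexity estimate loses exactly the additive constant we are trying to win, so I would have to analyse the near-extremal distributions (one heavy secant per external point, tangents otherwise) and rule them out for $q>3$, closing the inequality tightly to the strict bound $k>\sqrt{q}+1$. The square case is then immediate: if $q$ is a square then $\sqrt{q}+1\in\Z$, so the strict inequality together with $k\in\Z$ upgrades automatically to $k\geq\sqrt{q}+2$, i.e.\ $|\mathcal{S}|\geq q+\sqrt{q}+2$. Finally, any small values of $q$ left uncovered by the asymptotic estimate would be checked by hand.
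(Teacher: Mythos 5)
This statement is quoted in the paper from Bruen and Silverman \cite{bsil}; the paper itself gives no proof, so your attempt can only be judged on its own terms. On those terms it has a genuine gap: the decisive inequality is never derived. Your preparatory counts are all correct (at least $q-k$ tangents per parallel class, at least $q-k+2$ tangents through an external point, at least $q-k+1$ tangents through each external point of a fixed tangent line $\ell$, and the identity $\sum_{i,j}\binom{a_{i,j}-1}{2}=\binom{q+k}{2}-(q+1)k$), but from the double count you set up, the trivial upper bound $\theta_S\leq q-1$ for each $S\in\mathcal{S}\setminus\{P\}$ only returns $(q-1)(q-k+1)\leq (q+k-1)(q-1)$, i.e.\ $k\geq 1$. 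The entire content of the theorem lives in the ``second-moment step'' that you describe only as producing ``a relation of the shape $k^{2}\geq q$ up to lower-order terms.'' That is doubly insufficient: no such relation is actually extracted from your incidence counts, and even if it were, $k^{2}\geq q$ gives $k\geq\sqrt{q}$, which is the Bruen--Thas level of precision, not the strict bound $k>\sqrt{q}+1$. Your own computation confirms the problem, since the convexity estimate you do carry out yields only $k\geq\tfrac12(1+\sqrt{4q-3})\approx\sqrt{q}+\tfrac12$, and you then explicitly defer the analysis of near-extremal distributions ``for $q>3$'' to future work. That deferred analysis is precisely the Bruen--Silverman improvement; without it you have a plausible strategy, not a proof.

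Two smaller remarks. First, the final reduction for square $q$ is fine: if $\sqrt{q}\in\Z$ then $|\mathcal{S}|>q+\sqrt{q}+1$ with $|\mathcal{S}|\in\Z$ indeed gives $|\mathcal{S}|\geq q+\sqrt{q}+2$. Second, if you want a model for how to close such an argument rigorously, the present paper's own proofs of Lemmas \ref{eerste} and \ref{plus1} are instructive: they work in the dual setting, convert the coverage condition into the exact identity of Lemma \ref{Beq} for the knot numbers $x_i$, and then extract the additive constants by bounding the nonnegative quadratic form $\sum_i (i-1)(k-i)x_i$ from below line by line via Lemma \ref{afschatting}, rather than by a global Cauchy--Schwarz step. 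Some mechanism of that kind, which pins down and excludes the extremal distributions rather than estimating them away, is what your proposal is missing.
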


We can embed an affine plane $\mathcal{A}$ in a projective plane $\mathcal{P}$ by adding the parallel classes as points and a unique line $\ell_{\infty}$ containing precisely these points (the line `at infinity'). This process is called \emph{completion}. By adding one point on $\ell_{\infty}$ to a blocking set of $\mathcal{A}$ we get a blocking set of $\mathcal{P}$. Using this correspondence, we can see that the non-square case of the Theorem \ref{silverman} also follows from the main result of \cite{bier}.

In this paper, we will prove the following:
\begin{maintheorem}
A blocking set of an affine plane of order $q$, $q\geq 25$, contains at least $q+\left\lfloor\sqrt{q}\right\rfloor+3$ points.
\end{maintheorem}

\section{The proof of the main theorem}

The following lemma will be used in the proof of our main theorem.

\begin{lemma}\label{afschatting}
	Let $b>1$ be an integer and let $k,m\in\N$ be such that $k\equiv m\pmod{b-1}$ and $m<b-1$. If $a_{1},\dots,a_{b}\in\N$ satisfy $\sum_{i=1}^{b}(i-1)a_{i}=k$, then $\sum_{i=1}^{b}(b-i)(i-1)a_{i}\geq m(b-1-m)$.
\end{lemma}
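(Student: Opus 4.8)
The plan is to reparametrise and reduce the statement to a clean extremal problem over nonnegative integers, then solve that problem by a local exchange (``spreading'') argument.

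First I would substitute $n=b-1$, $j=i-1$ and $c_j=a_{j+1}$, so that $j$ ranges over $\{0,1,\dots,n\}$. The hypothesis becomes $\sum_{j=0}^{n} j\,c_j=k$ with $c_j\in\N$, and the target inequality becomes $\sum_{j=0}^{n} j(n-j)c_j\ge m(n-m)$, where $0\le m<n$ and $m\equiv k\pmod{n}$. The key preliminary observation is the identity $\sum_{j=0}^{n} j(n-j)c_j = nk-\sum_{j=0}^{n} j^2 c_j$, obtained by expanding $j(n-j)=nj-j^2$ and using the constraint. Thus it suffices to prove the equivalent upper bound $T:=\sum_{j=0}^{n} j^2 c_j \le nk-m(n-m)$; in other words, I would determine the \emph{maximum} of $T$ subject to the linear constraint, which is the conceptual heart of the argument.

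To find this maximum I would use that the units of weight sitting at $j=0$ or $j=n$ contribute $0$ to the objective $\sum_j j(n-j)c_j$, and argue by spreading apart the remaining \emph{interior} units, i.e.\ those with $1\le j\le n-1$, counted with multiplicity. The two moves are: for two interior units at distinct positions $a<b$, decrease $c_a,c_b$ by $1$ and increase $c_{a-1},c_{b+1}$ by $1$; and for two interior units at the same position $a$, decrease $c_a$ by $2$ and increase $c_{a-1},c_{a+1}$ by $1$. Each move keeps every index in $\{0,\dots,n\}$, preserves $\sum_j j\,c_j=k$, and strictly increases $T$ (by $2(b-a+1)$ and by $2$ respectively, which a one-line computation confirms). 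Since $T\le nk$ is bounded above and each move raises $T$ by a positive integer, the process terminates.

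At termination at most one interior unit survives, and here is the crux of the case analysis: if none survives then $k=n\,c_n$, forcing $m=0$ and $T=nk$; if a single unit survives at position $r$, then $k=n\,c_n+r$ with $1\le r\le n-1$ forces $r=m$, whence $T=n(k-m)+m^2=nk-m(n-m)$. In both cases the terminal value is $T=nk-m(n-m)$, so every admissible configuration satisfies $T\le nk-m(n-m)$, which is exactly the desired bound. I expect the main obstacle to be book-keeping rather than ideas: verifying that the exchange moves never push an index outside $\{0,\dots,n\}$ nor make any $c_j$ negative, and cleanly separating the cases $m=0$ and $m\ge 1$ so that the terminal value of $T$ is pinned down unambiguously.
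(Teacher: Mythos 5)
Your proof is correct and follows essentially the same strategy as the paper: both reduce the claim, via the identity $\sum_{i}(b-i)(i-1)a_{i}=(b-1)k-\sum_{i}(i-1)^{2}a_{i}$, to \emph{maximising} $\sum_{i}(i-1)^{2}a_{i}$ under the linear constraint, and both pin down the maximiser by a local exchange argument terminating at the configuration with $\ell$ units at position $b-1$ and one at position $m$. The only cosmetic difference is the direction of the moves (you spread two interior units one step apart towards the endpoints, while the paper merges two interior units into one at the sum of their positions, wrapping around past $b-1$), and your explicit termination argument via the bounded integer $T$ makes rigorous a point the paper leaves implicit.
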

\begin{proof}
	Let $\ell$ be an integer such that $k=\ell(b-1)+m$. We first argue that $\sum_{i=1}^{b}(i-1)^{2}a_{i}\leq\ell(b-1)^{2}+m^{2}$ and that equality is reached if and only if $a_{b}=\ell$, $a_{m+1}=1$ and all $a_{i}$'s with $i\in\{2,\dots,b-1\}\setminus\{m+1\}$ equal zero. If $a_{1},\dots,a_{b}\in\N$ satisfy $\sum_{i=1}^{b}(i-1)a_{i}=k$ and we can find an $a_{j}\geq2$ for some $2\leq j\leq b-1$, then set
	\begin{align*}
	a'_{i}=\begin{cases}
	a_{i}-2 &\text{if }i=j\\
	a_{i}+1 &\text{if }\begin{cases}
	i=2j-1 &\text{if }2j\leq b\\
	i\in\{2j-b,b\}&\text{if }2j> b
	\end{cases}\\
	a_{i} &\text{else}
	\end{cases}\;.
	\end{align*}
	We find that $\sum_{i=1}^{b}(i-1)a'_{i}=k$ and that $\sum_{i=1}^{b}(i-1)^{2}a_{i}< \sum_{i=1}^{b}(i-1)^{2}a'_{i}$. If $a_{1},\dots,a_{b}\in\N$ satisfy $\sum_{i=1}^{b}(i-1)a_{i}=k$ and we can find $a_{j},a_{j'}\geq1$ for $2\leq j\neq j'\leq b-1$, then set
	\begin{align*}
	a'_{i}=\begin{cases}
	a_{i}-1 &\text{if }i\in\{j,j'\}\\
	a_{i}+1 &\text{if }\begin{cases}
	i=j+j'-1 &\text{if }j+j'\leq b\\
	i\in\{j+j'-b,b\}&\text{if }j+j'> b
	\end{cases}\\
	a_{i} &\text{else}
	\end{cases}\;.
	\end{align*}
	We find that $\sum_{i=1}^{b}(i-1)a'_{i}=k$ and that $\sum_{i=1}^{b}(i-1)^{2}a_{i}< \sum_{i=1}^{b}(i-1)^{2}a'_{i}$. From these arguments the claim follows. Now, we find immediately that
	\begin{align*}
	\sum_{i=1}^{b}(b-i)(i-1)a_{i}&=(b-1)\sum_{i=1}^{b}(i-1)a_{i}-\sum_{i=1}^{b}(i-1)^{2}a_{i}\\
	&\geq(b-1)k-\left(\ell(b-1)^{2}+m^{2}\right)=m(b-1-m)\;.\qedhere
	\end{align*}
\end{proof}

We will prove our main theorem in the dual setting. A line $\ell$ of a plane $\Pi$ is said to {\em cover} a point $P$ if $P$ lies on $\ell$. It is now easy to see that the following lemma holds.

\begin{lemma}
	The dual of an affine blocking set (considered as a subset of the projective plane $\Pi$ obtained through completion), is a set of lines in the dual of $\Pi$, covering all the points but one.
\end{lemma}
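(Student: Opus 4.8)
The plan is to translate the defining property of an affine blocking set through the completion-and-duality correspondence and simply read off what condition it becomes. First I would fix notation: let $\Pi$ be the projective completion of $\mathcal{A}$ with line at infinity $\ell_{\infty}$, and let $\delta$ denote a duality of $\Pi$, sending each point $P$ to a line $P^{\delta}$ of the dual plane and each line $\ell$ to a point $\ell^{\delta}$, in an incidence-preserving way, so that $P\in\ell$ if and only if $\ell^{\delta}\in P^{\delta}$. Since a blocking set $B$ of $\mathcal{A}$ consists of affine points only, when $B$ is viewed inside $\Pi$ none of its points lies on $\ell_{\infty}$; I would record this fact for use at the end.

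Next I would rewrite the blocking condition in $\Pi$. The lines of $\mathcal{A}$ are exactly the lines of $\Pi$ other than $\ell_{\infty}$, so ``every line of $\mathcal{A}$ meets $B$'' becomes: for each line $\ell\neq\ell_{\infty}$ of $\Pi$ there is a point $P\in B$ with $P\in\ell$. Applying $\delta$ and using incidence-preservation, this reads: for each point $\ell^{\delta}\neq(\ell_{\infty})^{\delta}$ of the dual plane, some line $P^{\delta}$ with $P\in B$ passes through $\ell^{\delta}$. Writing $B^{\delta}=\{P^{\delta}:P\in B\}$ and recalling the convention that a line covers the points on it, this is precisely the statement that every point of the dual of $\Pi$, with the possible exception of $(\ell_{\infty})^{\delta}$, is covered by a line of $B^{\delta}$.

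Finally I would pin down the exceptional point exactly. As $\delta$ is a bijection, the points of the dual plane correspond to the lines of $\Pi$, and the only point left outside the covering requirement above is $(\ell_{\infty})^{\delta}$. To see it is genuinely uncovered, note that a line $P^{\delta}$ passes through $(\ell_{\infty})^{\delta}$ if and only if $P\in\ell_{\infty}$; since $B$ contains no point of $\ell_{\infty}$, no line of $B^{\delta}$ covers $(\ell_{\infty})^{\delta}$. Hence $B^{\delta}$ is a set of lines of the dual of $\Pi$ covering every point but the single point $(\ell_{\infty})^{\delta}$, which is the claim.

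The argument is entirely formal, so there is no real obstacle; the only point that deserves care is verifying that the exceptional point is not merely \emph{exempt} from the covering requirement but is \emph{actually uncovered}, which is what turns an ``all but at most one'' statement into the exact ``all but one'' assertion. This is exactly where the hypothesis that an affine blocking set avoids $\ell_{\infty}$ is used.
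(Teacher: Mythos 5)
Your proof is correct and is exactly the routine duality translation that the paper leaves implicit (the paper states only that the lemma is ``easy to see'' and gives no proof). You rightly flag the one point needing care --- that $(\ell_{\infty})^{\delta}$ is genuinely uncovered because $B$ contains no point of $\ell_{\infty}$ --- which is what makes the conclusion ``all points but one'' rather than ``all but at most one.''
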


\begin{definition}
	Let $\L$ be a set of lines of a projective plane $\mathcal{P}$. A {\em $k$-knot} is a point of $\mathcal{P}$ that lies on exactly $k$ lines of $\L$.
\end{definition}

\begin{lemma}\label{Beq}
	Let $P$ be a point of an axiomatic projective plane $\mathcal{P}$ of order $q$, and let $\mathcal{L}$ be a set of lines in $\mathcal{P}$ such that all points but $P$ are on a line of $\mathcal{L}$, and $P$ is not.  Let $k$ be such that $\mathcal{L}$ has no $k'$-knot for $k'>k$. Let $x_i$ denote the number of $i$-knots in $\mathcal{P}$. Then, for all $\overline{k}$:
	\begin{align*}
	\sum_{i=1}^{k}(i-1)(\overline{k}-i)x_{i}
	=-|\mathcal{L}|(|\mathcal{L}|-1)+\overline{k}|\mathcal{L}|(q+1)-\overline{k}(q^{2}+q)\;.
	\end{align*}
\end{lemma}

\begin{proof}
	\par We denote the number of $i$-knots (points on exactly $i$ lines of $\mathcal{L}$) by $x_{i}$, $i=1,\dots,k$. We do the standard countings: first we count the number of points, then the number of pairs $\{(P,\ell)\mid P\ \mbox{is a point on the line }\ell\in\L\}$ and finally the number of triples $\{(P,\ell,m)\mid \ell,m\in\L,\ell\neq m,\ P\mbox{ a point on }\ell\mbox{ and }m\}$. We find
	\[
	\sum_{i=1}^{k}x_{i}=q^{2}+q\;,\quad\sum_{i=1}^{k}ix_{i}=|\mathcal{L}|(q+1)\;,\quad\sum_{i=1}^{k}i(i-1)x_{i}=|\mathcal{L}|(|\mathcal{L}|-1)\;.
	\]
	Now we can execute the following calculation:
	\begin{align*}
	\sum_{i=1}^{k}(i-1)(\overline{k}-i)x_{i}&=-\sum_{i=1}^{k}i(i-1)x_{i}+\overline{k}\sum_{i=1}^{k}ix_{i}-\overline{k}\sum_{i=1}^{k}x_{i}\nonumber\\
	&=-|\mathcal{L}|(|\mathcal{L}|-1)+\overline{k}|\mathcal{L}|(q+1)-\overline{k}(q^{2}+q)\;.
	\qedhere
	\end{align*}
\end{proof}

\begin{lemma}\label{eerste}
	Let $P$ be a point of an axiomatic projective plane $\mathcal{P}$ of order $q$, $q\geq9$, and let $\mathcal{L}$ be a set of less than $2q-1$ lines in $\mathcal{P}$ such that all points but $P$ are on a line of $\mathcal{L}$, and $P$ is not.  Let $k$ be such that $\mathcal{L}$ admits a $k$-knot but no $k'$-knot for $k'>k$. Then $k<q$, $|\L|\geq q+k$ and $k>\left\lfloor\sqrt{q}\right\rfloor$.
\end{lemma}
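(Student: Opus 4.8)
The plan is to work entirely with the knots of $\mathcal{L}$ and to settle the three assertions in the stated order, since the first two feed into the third. Throughout write $n=|\mathcal{L}|$ and let $x_i$ be the number of $i$-knots; I will freely use the three standard counts recorded in the proof of Lemma \ref{Beq}, namely $\sum_i x_i=q^2+q$, $\sum_i ix_i=n(q+1)$ and $\sum_i i(i-1)x_i=n(n-1)$.

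For $k<q$ I would argue by contradiction. A point lies on $q+1$ lines of $\mathcal{P}$, so a $k$-knot $R$ with $k\geq q$ has at most one line through it outside $\mathcal{L}$; since $PR$ is such a line this forces $k=q$ with $PR$ the unique non-secant through $R$. Each of the $q-1$ points of $PR\setminus\{P,R\}$ must then be covered by a line of $\mathcal{L}$ missing $R$, and distinct such points require distinct lines (each meets $PR$ once), giving $q-1$ lines off $R$ in addition to the $q$ through $R$; hence $n\geq 2q-1$, contradicting the hypothesis. The bound $n\geq q+k$ uses the same idea: fix a $k$-knot $R$; as $k\leq q-1$ there is a line $m\neq PR$ through $R$ with $m\notin\mathcal{L}$, and each of the $q$ points of $m\setminus\{R\}$ (none equal to $P$) is covered by a line of $\mathcal{L}$ not through $R$, forcing $q$ such lines on top of the $k$ lines through $R$.

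For $k>\lfloor\sqrt{q}\rfloor$ the starting point is Lemma \ref{Beq} with $\overline{k}=k$: every term of $\sum_i(i-1)(k-i)x_i$ is non-negative, so the identity yields $k(q+1)(n-q)\geq n(n-1)$, i.e.\ $k\geq \frac{n(n-1)}{(q+1)(n-q)}$. Evaluated at the extremal value $n=q+k$ this reads $k^2(q+1)\geq(q+k)(q+k-1)$, which rearranges to $q(k-1)^2\geq q^2-k$; since $k<q$ this forces $(k-1)^2>q-1$, hence $(k-1)^2\geq q$ and $k\geq\lfloor\sqrt{q}\rfloor+1$. Thus in the minimal case the estimate already gives the claim, and Lemma \ref{afschatting} is exactly the tool to sharpen the crude bound for larger $n$: applying it on each of the $q+1$ lines through $P$ — on which the $q$ covered points carry multiplicities of maximum $k$ and total excess $n-q$ — and summing (each point lies on a unique line through $P$) reinforces the inequality to
\[
k(q+1)(n-q)-n(n-1)\ \geq\ (q+1)\,m\,(k-1-m),\qquad m\equiv (n-q)\!\!\pmod{k-1}.
\]

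The hard part will be to make this effective uniformly across the whole admissible window $q+k\leq n\leq 2q-2$. The simple bound $k\geq \frac{n(n-1)}{(q+1)(n-q)}$ is tight only near $n=q+k$ and decays to a constant as $n\to 2q-2$, so the genuine content is to exclude a blocking set of size well below $2q-1$ all of whose lines are of low order; equivalently, to show that small maximal multiplicity forces $n\geq 2q-1$. The reinforcing term $(q+1)\,m\,(k-1-m)$ is what should supply the missing strength, but it degenerates precisely when $(k-1)\mid(n-q)$, so I expect the real work to lie in handling those residues — for instance by also running the pencil argument through a $k$-knot $R$, whose non-secant lines carry excess $n-q-k$ rather than $n-q$ and hence a different residue, and combining such estimates with $n\geq q+k$. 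Controlling this residue interplay, rather than the initial counting, is where I anticipate the difficulty.
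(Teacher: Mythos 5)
Your arguments for $k<q$ and $|\mathcal{L}|\geq q+k$ are correct and essentially identical to the paper's: no $(q+1)$-knot exists because a full pencil would cover $P$; a $q$-knot $R$ forces $q-1$ further lines to cover $PR\setminus\{P,R\}$, giving $|\mathcal{L}|\geq 2q-1$; and a line through a $k$-knot that is not in $\mathcal{L}$ and avoids $P$ forces $q$ lines of $\mathcal{L}$ off that knot. The problem is the third claim. What you actually establish is only the boundary case $|\mathcal{L}|=q+k$: the inequality $k(q+1)(n-q)\geq n(n-1)$ cannot be ``evaluated at the extremal value $n=q+k$'', since knowing it holds for the true $n$ and knowing $n\geq q+k$ does not allow you to substitute $n=q+k$ (the left-hand side grows with $n$). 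Everything after that point in your write-up is a plan rather than a proof --- you say yourself that making the estimate effective on $q+k<n\leq 2q-2$ is ``the hard part'' and that you ``anticipate the difficulty'' in the residue interplay, without resolving it. So the third assertion remains unproved outside the case $n=q+k$.

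For comparison, the paper uses the same identity but a different substitution: it bounds $k\leq\lfloor\sqrt{q}\rfloor$ inside $0\leq -n(n-1)+k(q+1)(n-q)$ (using $n-q\geq k>0$), sets $n=q+\lfloor\sqrt{q}\rfloor+m$, and shows the resulting expression is negative for every $m\leq 2$. That yields the conclusion for all $n\leq q+\lfloor\sqrt{q}\rfloor+2$, not just $n=q+k$, and this is the range actually invoked later (in Lemma \ref{plus1} and the main theorem one always has $|\mathcal{L}|\leq q+\lfloor\sqrt{q}\rfloor+3$). You should at minimum replace your boundary-case evaluation by this substitution. That said, your diagnosis of where the difficulty lies is not a misreading: the paper's printed proof also derives a contradiction only for $m\leq 2$ and says nothing about $q+\lfloor\sqrt{q}\rfloor+3\leq n\leq 2q-2$, where the counting inequality --- even reinforced via Lemma \ref{afschatting} as you propose --- is genuinely satisfiable (for instance $q=25$, $n=40$, $k=5$ with $x_1=455$, $x_2=130$, $x_5=65$ passes all three standard counts, and the reinforcement degenerates there since $4\mid$ does not divide $15$ with a useful residue: $m=3$ gives $m(k-1-m)=0$). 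So the lemma as stated is not fully covered by either argument for large $|\mathcal{L}|$; but to match what the paper proves and uses, you need the $m\leq 2$ computation, not just $n=q+k$.
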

\begin{proof}
	It is immediate that $\mathcal{L}$ does not admit a $(q+1)$-knot as the $q+1$ lines through some fixed point cover all points. If $\mathcal{L}$ admits a $q$-knot $R$, then $\mathcal{L}$ has to contain at least $q-1$ lines not through $R$, in order to cover the points of $PR$ different from $P$ and $R$. So, $|\mathcal{L}|\geq 2q-1$, a contradiction.
	\par Let $k$ be such that $\mathcal{L}$ admits a $k$-knot, but no $k'$-knot for $k'>k$, and let $K$ be a $k$-knot. We may assume that $k\leq q-1$, and so we can find a line $\ell$ through $K$ not in $\mathcal{L}$ and not through $P$. Since all points on $\ell$ different from $K$ are contained in a line of $\mathcal{L}$ and these lines are necessarily different, we have $|\mathcal{L}|\geq q+k$.
	\par If $k\leq\left\lfloor\sqrt{q}\right\rfloor$, then it follows from Lemma \ref{Beq} that
	\[
		0\leq\sum_{i=1}^{k}(i-1)(k-i)x_{i}=-|\mathcal{L}|(|\mathcal{L}|-1)+k(q+1)\left(|\mathcal{L}|-q\right)\leq-|\mathcal{L}|(|\mathcal{L}|-1)+\left\lfloor\sqrt{q}\right\rfloor(q+1)\left(|\mathcal{L}|-q\right)
	\]
	where we used that $|\mathcal{L}|-q\geq k>0$. We set $|\mathcal{L}|=q+\left\lfloor\sqrt{q}\right\rfloor+m$ and we find that
	\begin{align*}
		0&\leq-(q^{2}-q\left\lfloor\sqrt{q}\right\rfloor^{2})-m^{2}-2q\left\lfloor\sqrt{q}\right\rfloor-2mq-m\left\lfloor\sqrt{q}\right\rfloor+q+\left\lfloor\sqrt{q}\right\rfloor+m+mq\left\lfloor\sqrt{q}\right\rfloor\\
		&\leq(m-2)(q\left\lfloor\sqrt{q}\right\rfloor-2q-\left\lfloor\sqrt{q}\right\rfloor-m-1)-3q-\left\lfloor\sqrt{q}\right\rfloor-2\\
		&=-(m-2)^{2}+(m-2)(q\left\lfloor\sqrt{q}\right\rfloor-2q-\left\lfloor\sqrt{q}\right\rfloor-3)-3q-\left\lfloor\sqrt{q}\right\rfloor-2\\
		&\leq (m-2)(q\left\lfloor\sqrt{q}\right\rfloor-2q-\left\lfloor\sqrt{q}\right\rfloor-3)-3q-\left\lfloor\sqrt{q}\right\rfloor-2\;.
	\end{align*}
	We know that $\left\lfloor\sqrt{q}\right\rfloor>\sqrt{q}-1$ and hence $q\left\lfloor\sqrt{q}\right\rfloor-2q-\left\lfloor\sqrt{q}\right\rfloor-3> q\sqrt{q}-3q-\sqrt{q}-2$. We can see that $q\sqrt{q}-3q-\sqrt{q}-2>0$ for $q\geq12$ and hence that $q\left\lfloor\sqrt{q}\right\rfloor-2q-\left\lfloor\sqrt{q}\right\rfloor-3>0$ for $q\geq9$. Since $q\left\lfloor\sqrt{q}\right\rfloor-2q-\left\lfloor\sqrt{q}\right\rfloor-3>0$ for $q\geq9$ and $-3q-\left\lfloor\sqrt{q}\right\rfloor-2<0$, the previous inequality gives a contradiction for $m\leq 2$.
\end{proof}

\begin{lemma}\label{plus1}
	Let $P$ be a point of an axiomatic projective plane $\mathcal{P}$ of order $q$, $q\geq 25$, and let $\mathcal{L}$ be a set of at most $q+\left\lfloor \sqrt{q}\right\rfloor+3$ lines in $\mathcal{P}$ such that all points but $P$ are on a line of $\mathcal{L}$, and $P$ is not.  Let $k$ be such that $\mathcal{L}$ admits a $k$-knot but no $k'$-knot for $k'>k$. Then $k>\left\lfloor\sqrt{q}\right\rfloor+1$.

\end{lemma}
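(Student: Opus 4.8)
Write $s=\lfloor\sqrt q\rfloor$. Since $q+s+3<2q-1$ for $q\ge 25$, Lemma~\ref{eerste} applies and yields $k\ge s+1$ together with $|\mathcal L|\ge q+k$. It therefore suffices to rule out $k=s+1$. Under this assumption $|\mathcal L|\ge q+s+1$, so the hypothesis $|\mathcal L|\le q+s+3$ lets me write $|\mathcal L|=q+s+1+t$ with $t\in\{0,1,2\}$; I also set $q=s^{2}+r$ with $0\le r\le 2s$.

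The heart of the plan is to apply Lemma~\ref{afschatting} to each pencil through $P$ rather than once, globally. Fix a line $\ell$ through $P$. Since $P$ is uncovered, no line of $\mathcal L$ passes through $P$, so every line of $\mathcal L$ meets $\ell$ in a unique point different from $P$; hence the $q$ points of $\ell\setminus\{P\}$ carry multiplicities (numbers of lines of $\mathcal L$ through them) in $\{1,\dots,s+1\}$ summing to $|\mathcal L|$. Writing $a_{i}$ for the number of points of $\ell$ lying on exactly $i$ lines of $\mathcal L$, we get $\sum_i(i-1)a_{i}=|\mathcal L|-q=s+1+t$. As $s+1+t\equiv 1+t\pmod s$ with $1+t<s$, Lemma~\ref{afschatting} with $b=s+1$ gives $\sum_{Q\in\ell}(s+1-\mathrm{mult}(Q))(\mathrm{mult}(Q)-1)\ge(1+t)(s-1-t)$. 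Each point $\ne P$ lies on exactly one line through $P$, so summing over the $q+1$ pencils through $P$ yields
\[
\sum_{i=1}^{s+1}(s+1-i)(i-1)\,x_{i}\ \ge\ (q+1)(1+t)(s-1-t),
\]
which is much stronger than a single global application of Lemma~\ref{afschatting}.

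Next I compute the same sum exactly: Lemma~\ref{Beq} with $\overline k=s+1$ gives
\[
\sum_{i=1}^{s+1}(s+1-i)(i-1)\,x_{i}=-|\mathcal L|(|\mathcal L|-1)+(s+1)(q+1)(|\mathcal L|-q).
\]
Substituting $|\mathcal L|=q+s+1+t$ and $q=s^{2}+r$, the exact value minus the lower bound of the previous step simplifies to $q(1-s-r)+2$ for $t=0$, to $q(3-s-r)+4-2s$ for $t=1$, and to $q(7-s-r)+6-4s$ for $t=2$. The first two are negative for every $q\ge 25$, and the third is negative precisely when $s+r\ge 7$. For these ranges the exact value is then smaller than the lower bound, a contradiction, and $k=s+1$ is excluded. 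This settles all cases except the three borderline orders with $t=2$ and $s+r\le 6$, namely $q\in\{25,26,36\}$.

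The main obstacle is exactly these residual $t=2$ orders, where the per-pencil estimate is tight up to a bounded additive error. To attack them I would bring in a collinearity constraint on the $(s+1)$-knots. If two $(s+1)$-knots were joined by a line $g\notin\mathcal L$ with $P\notin g$, then the $q-1$ remaining points of $g$ would share total multiplicity $|\mathcal L|-2(s+1)<q-1$ (since $2(s+1)>|\mathcal L|-q+1=s+2+t$ for $s>t$), forcing an uncovered point other than $P$; and $g$ cannot pass through $P$, as two $(s+1)$-knots on a line through $P$ would require excess $2s>s+1+t$ there. Hence the $(s+1)$-knots are pairwise collinear on lines of $\mathcal L$, while each of them lies on exactly $s+1$ lines of $\mathcal L$ and each line of $\mathcal L$ carries at most $(|\mathcal L|-1)/s$ of them. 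Feeding in the lower bound on the number of $(s+1)$-knots that the refined per-pencil count provides (lines through $P$ without an $(s+1)$-knot contribute strictly more than those with one), I expect this rigid incidence pattern to be incompatible with $|\mathcal L|\le q+s+3$ for $q\in\{25,26,36\}$; turning this incompatibility into a clean contradiction is the delicate final step.
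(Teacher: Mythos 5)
Your handling of the cases $|\mathcal{L}|=q+\lfloor\sqrt q\rfloor+1$ and $|\mathcal{L}|=q+\lfloor\sqrt q\rfloor+2$ is correct and follows a genuinely different route from the paper. You apply Lemma~\ref{afschatting} once on each of the $q+1$ lines through $P$ (on every such line the excess $\sum_i(i-1)a_i$ equals $|\mathcal{L}|-q=s+1+t$, since no line of $\mathcal{L}$ passes through $P$), and sum to get $\sum_i(i-1)(s+1-i)x_i\ge(q+1)(1+t)(s-1-t)$, which you compare with the exact value from Lemma~\ref{Beq}. I verified the three discrepancies $q(1-s-r)+2$, $q(3-s-r)+4-2s$ and $q(7-s-r)+6-4s$ (with $s=\lfloor\sqrt q\rfloor$, $q=s^2+r$): they are correct, and the first two are indeed negative for all $q\ge25$. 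The paper instead treats $m=0$ by classifying the possible individual contributions to the sum (finishing $q=25$ with a congruence modulo $5$) and $m=1$ by summing Lemma~\ref{afschatting} over the pencil through a maximal knot, with a divisibility sub-case; your single uniform estimate replaces both and is cleaner where it closes.

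However, as a proof of the lemma as stated the proposal is incomplete. For $t=2$, i.e.\ $|\mathcal{L}|=q+\lfloor\sqrt q\rfloor+3$, your inequality fails to give a contradiction precisely for $q\in\{25,26,36\}$, and for these you offer only a programme: the (correct) observation that any two $(s+1)$-knots must be joined by a line of $\mathcal{L}$, followed by ``I expect this rigid incidence pattern to be incompatible.'' That is a genuine gap, not a routine verification --- for $q=25$ the slack between the exact value ($192$) and your lower bound ($156$) is $36$, and closing it requires a real analysis of which excess patterns can occur on the lines through $P$, beyond the stated collinearity constraint. You should know that the paper's own proof has the same blind spot: it announces and treats only $m=0$ and $m=1$, never $m=2$, and the lemma is only ever invoked in the main theorem when $|\mathcal{L}|\le q+\lfloor\sqrt q\rfloor+2$ (otherwise the theorem's conclusion is automatic), so the missing case is harmless downstream. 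Still, for the statement as written you must either finish the three residual orders or weaken the hypothesis to $|\mathcal{L}|\le q+\lfloor\sqrt q\rfloor+2$, which is all that is actually needed.
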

\begin{proof}
	Note that $2q-1>q+\sqrt{q}+3\geq q+\left\lfloor\sqrt{q}\right\rfloor+3$ since $q\geq7$. From Lemma \ref{eerste}, we know that $k>\left\lfloor\sqrt{q}\right\rfloor$. If $k=\left\lfloor\sqrt{q}\right\rfloor+1$, then we have $|\mathcal{L}|\geq q+\left\lfloor\sqrt{q}\right\rfloor+1$. Setting $|\mathcal{L}|=q+\left\lfloor\sqrt{q}\right\rfloor+1+m$ it follows from Lemma \ref{Beq} that
	\begin{align}\label{Beq0}
		\sum_{i=1}^{\left\lfloor\sqrt{q}\right\rfloor+1}(i-1)(\left\lfloor\sqrt{q}\right\rfloor+1-i)x_{i}=-q\left(q-\left\lfloor\sqrt{q}\right\rfloor^{2}\right)+\left\lfloor\sqrt{q}\right\rfloor+1+m\left((q-1)\left\lfloor\sqrt{q}\right\rfloor-q-m\right)\;.
	\end{align}
	\par We need to show that for $m=0$ and $m=1$, we find a contradiction.
	\paragraph*{Case 1: $m=0$.}\mbox{}\\
	\indent If $m=0$ the total contribution of the $i$-knots, $i>0$, in the left hand side of \eqref{Beq0} equals $\left\lfloor\sqrt{q}\right\rfloor+1-q\left(q-\left\lfloor\sqrt{q}\right\rfloor^{2}\right)$. We know that 1-knots and $(\left\lfloor\sqrt{q}\right\rfloor+1)$-knots contribute zero to this sum, 2-knots and $\left\lfloor\sqrt{q}\right\rfloor$-knots contribute $\sqrt{q}-1$ to this sum, and all other knots contribute at least $2\left\lfloor\sqrt{q}\right\rfloor-4$ to this sum. Since $\left\lfloor\sqrt{q}\right\rfloor\geq5$ we have $0<\left\lfloor\sqrt{q}\right\rfloor-1<2\left\lfloor\sqrt{q}\right\rfloor-4<2\left(\left\lfloor\sqrt{q}\right\rfloor-1\right)$. We look at the different cases.
	\begin{itemize}
		\item If $\left\lfloor\sqrt{q}\right\rfloor+1-q\left(q-\left\lfloor\sqrt{q}\right\rfloor^{2}\right)=0$, then either $\left\lfloor\sqrt{q}\right\rfloor+1=0$ or $\left\lfloor\sqrt{q}\right\rfloor+1\geq q$, which both lead to a contradiction.
		\item If $\left\lfloor\sqrt{q}\right\rfloor+1-q\left(q-\left\lfloor\sqrt{q}\right\rfloor^{2}\right)=\left\lfloor\sqrt{q}\right\rfloor-1$, then $q\left(q-\left\lfloor\sqrt{q}\right\rfloor^{2}\right)=2$, which also leads to contradiction since $q\geq3$.
		\item If $\left\lfloor\sqrt{q}\right\rfloor+1-q\left(q-\left\lfloor\sqrt{q}\right\rfloor^{2}\right)\geq2\left\lfloor\sqrt{q}\right\rfloor-4$, then $q\left(q-\left\lfloor\sqrt{q}\right\rfloor^{2}\right)+\left(\left\lfloor\sqrt{q}\right\rfloor-5\right)\leq 0$. Since both terms are non-negative, it follows that $q=25$ and there is equality in the assumption. So, in this case we find that there is exactly one 3-knot or exactly one 4-knot, and all other points (different from $P$) are 1-knots or 6-knots. Say there is a 3-knot $P'$ (the argument for a 4-knot is analogous). We count the tuples $(Q,\ell)$ with $\ell\in\mathcal{L}$ and $Q$ the intersection point of $\ell$ and $PP'$. Reducing modulo 5 we find $1\equiv|\mathcal{L}|\equiv(25-1)\cdot1+3\equiv 2\pmod{5}$, a contradiction.
	\end{itemize}
	\paragraph*{Case 2: $m=1$.}\mbox{}\\
	\indent If $m=1$ the total contribution of the $i$-knots, $i>0$, in the left hand side of \eqref{Beq0} equals $q\left(\left\lfloor\sqrt{q}\right\rfloor-1\right)-q\left(q-\left\lfloor\sqrt{q}\right\rfloor^{2}\right)$. We know that $\L$ admits a $(\left\lfloor\sqrt{q}\right\rfloor+1)$-knot, say $K$. Let $\ell$ be a line through $K$ and denote the number of $i$-knots on $\ell$ by $a_{i}$.
	\par If $\ell\notin\mathcal{L}$, then we have $\sum_{i=1}^{\sqrt{q}+1}ia_{i}=q+\left\lfloor\sqrt{q}\right\rfloor+2$ by counting the tuples $(Q,n)$ with $Q$ on $n$ and $n\in\mathcal{L}$. We also have $\sum_{i=1}^{\left\lfloor\sqrt{q}\right\rfloor+1}a_{i}=q+1-\delta$ where $\delta=0$ if $P\notin\ell$ and $\delta=1$ if $P\in\ell$. We find  $\sum_{i=1}^{\left\lfloor\sqrt{q}\right\rfloor+1}(i-1)a_{i}=\left\lfloor\sqrt{q}\right\rfloor+1+\delta$. Using Lemma \ref{afschatting} with $b=\left\lfloor\sqrt{q}\right\rfloor+1$, $k=\left\lfloor\sqrt{q}\right\rfloor+\delta+1$, $m=\delta+1$, we find that the points on $\ell$ contribute at least $\left\lfloor\sqrt{q}\right\rfloor-1$ to the sum in \eqref{Beq0} if $P\notin\ell$ (so if $\delta=0$) and that the points on the line $KP$ (so, for $\delta=1$) contribute at least $2(\left\lfloor\sqrt{q}\right\rfloor-2)$.
	\par If $\ell\in\mathcal{L}$ we have $\sum_{i=1}^{\left\lfloor\sqrt{q}\right\rfloor+1}(i-1)a_{i}=q+\left\lfloor\sqrt{q}\right\rfloor+1$ by counting the tuples $(Q,n)$ with $Q$ on $n$ and $n\in\mathcal{L}\setminus\{\ell\}$. We now distinguish between two cases.
	\begin{itemize}
		\item[(i)] If $q+\left\lfloor\sqrt{q}\right\rfloor+1\not\equiv0\pmod{\left\lfloor\sqrt{q}\right\rfloor}$, then we find that not all points on $\ell\in\mathcal{L}$ can be 1-knots and $\left(\left\lfloor\sqrt{q}\right\rfloor+1\right)$-knots, hence the points on $\ell$ contribute at least $\left\lfloor\sqrt{q}\right\rfloor-1$ to the sum in \eqref{Beq0}. It follows that
		\[
			q\left(\left\lfloor\sqrt{q}\right\rfloor-1\right)-q\left(q-\left\lfloor\sqrt{q}\right\rfloor^{2}\right)=\sum_{i=1}^{\sqrt{q}+1}(i-1)(\left\lfloor\sqrt{q}\right\rfloor+1-i)x_{i}\geq q(\left\lfloor\sqrt{q}\right\rfloor-1)+2(\left\lfloor\sqrt{q}\right\rfloor-2)\;,
		\]
		hence
		\[
			q\left(q-\left\lfloor\sqrt{q}\right\rfloor^{2}\right)+2\left(\left\lfloor\sqrt{q}\right\rfloor-2\right)\leq 0\;,
		\]
		a contradiction since $q>4$. Note that the contribution of $K$ is necessarily 0 so it does not matter that we counted this point $q+1$ times.
		\item[(ii)] If $q+\left\lfloor\sqrt{q}\right\rfloor+1\equiv0\pmod{\left\lfloor\sqrt{q}\right\rfloor}$, then $q=\left\lfloor\sqrt{q}\right\rfloor\left(\left\lfloor\sqrt{q}\right\rfloor+\varepsilon\right)-1$ for some $\varepsilon\in\{1,2\}$ since $\left\lfloor\sqrt{q}\right\rfloor\left(\left\lfloor\sqrt{q}\right\rfloor+0\right)-1\leq\sqrt{q}\sqrt{q}-1<q$ and $\left\lfloor\sqrt{q}\right\rfloor\left(\left\lfloor\sqrt{q}\right\rfloor+3\right)-1>(\sqrt{q}-1)(\sqrt{q}+2)-1\geq q$ for $q\geq9$. Using the previous argument, only looking at lines through $K$ not in $ \L$, we then find
		\begin{align*}
			q\left(\left\lfloor\sqrt{q}\right\rfloor-1\right)-q\left(q-\left\lfloor\sqrt{q}\right\rfloor^{2}\right)&=\sum_{i=1}^{\sqrt{q}+1}(i-1)(\left\lfloor\sqrt{q}\right\rfloor+1-i)x_{i}\\
			&\geq (q-\left\lfloor\sqrt{q}\right\rfloor-1)(\left\lfloor\sqrt{q}\right\rfloor-1)+2(\left\lfloor\sqrt{q}\right\rfloor-2)\;,
		\end{align*}
		hence
		\begin{align*}
			&q\left(q-\left\lfloor\sqrt{q}\right\rfloor^{2}\right)-\left(\left\lfloor\sqrt{q}\right\rfloor^{2}-2\left\lfloor\sqrt{q}\right\rfloor+3\right)\leq 0\;,\\
			\Leftrightarrow\quad&\varepsilon\left\lfloor\sqrt{q}\right\rfloor^{3}+(\varepsilon^{2}-2)\left\lfloor\sqrt{q}\right\rfloor^{2}-2(\varepsilon-1)\left\lfloor\sqrt{q}\right\rfloor-2\leq 0
		\end{align*}
		a contradiction for $\varepsilon\in\{1,2\}$ since $q\geq4$.\qedhere
	\end{itemize}
\end{proof}

\begin{lemma}\label{5superknots}
	Let $P$ be a point of an axiomatic projective plane $\mathcal{P}$ of order $q$, $q\geq 25$, and let $\mathcal{L}$ be a set of lines in $\mathcal{P}$ such that all points but $P$ are on a line of $\mathcal{L}$, and $P$ is not. Let $k$ be such that $\mathcal{L}$ admits a $k$-knot but no $k'$-knot for $k'>k$. Suppose that $k=\lfloor \sqrt{q}\rfloor+2$ and that $\mathcal{L}$ has size $q+\lfloor \sqrt{q}\rfloor+2$, then the number of $(\lfloor \sqrt{q}\rfloor+2)$-knots is at most $5$.
\end{lemma}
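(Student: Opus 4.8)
\textit{Proof proposal.} Write $s=\lfloor\sqrt q\rfloor$, so that by hypothesis the maximal knot value is $k=s+2$ and $|\mathcal{L}|=q+s+2$; note $s\ge 5$ since $q\ge 25$. For a point $Q$ let $i_Q$ denote the number of lines of $\mathcal{L}$ through $Q$ (so $Q$ is an $i_Q$-knot). My plan is entirely incidence-geometric: I will force the maximal knots onto very few lines and then count them, since a global application of Lemma \ref{Beq} controls the non-maximal knots but gives the wrong direction for bounding the $(s+2)$-knots themselves.

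First I would analyse the $q+1$ lines through a single maximal knot $K$, exactly $s+2$ of which lie in $\mathcal{L}$. For a line $\ell\ni K$ with $\ell\notin\mathcal{L}$ and $P\notin\ell$, counting incidences of $\ell$ with $\mathcal{L}$ gives $\sum_{Q\in\ell}(i_Q-1)=|\mathcal{L}|-(q+1)=s+1$, which is exactly the excess $i_K-1$ contributed by $K$ alone; hence every other point of $\ell$ is a $1$-knot. The exceptional line $KP$ (which cannot lie in $\mathcal{L}$, as $P$ is on no line of $\mathcal{L}$) carries $\sum_{Q\in KP,\,Q\ne P}(i_Q-1)=s+2$, so apart from $K$ it contains exactly one further non-$1$-knot, necessarily a $2$-knot $T_K$. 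This yields the key local structure: every non-$1$-knot other than $K$ lies on one of the $s+2$ lines of $\mathcal{L}$ through $K$, with the sole exception of $T_K\in KP$.

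Next I would exploit this simultaneously at every maximal knot. A line through $P$ has total excess only $s+2<2(s+1)$, hence carries at most one maximal knot; so the lines $KP$ are distinct, and if $T_K=T_{K'}$ this common point would lie on both $KP$ and $K'P$ and thus equal $P$, impossible for a $2$-knot. Therefore the partners $T_K$ are pairwise distinct. For maximal knots $K\ne K'$, the local structure at $K'$ applied to $K$ gives $KK'\in\mathcal{L}$, and applied to the $2$-knot $T_K$ (which differs from $K'$ and from $T_{K'}$) shows $T_K$ lies on an $\mathcal{L}$-line through $K'$, i.e.\ $K'$ lies on one of the two lines $g_K,g_K'$ of $\mathcal{L}$ through $T_K$. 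Thus all maximal knots except $K$ lie on $g_K\cup g_K'$, while $K\notin g_K\cup g_K'$ (otherwise $g_K$ or $g_K'$ would be the line $KT_K=KP\notin\mathcal{L}$). The same idea bounds the number of maximal knots on a single line: if $K_1,\dots,K_r$ are maximal knots on a line $g$ then $g=K_1K_2\in\mathcal{L}$ and $T_{K_1}\notin g$, so the two lines of $\mathcal{L}$ through $T_{K_1}$ meet $g$ in at most two points, which must be $K_2,\dots,K_r$; hence $r\le 3$.

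Finally I would combine two base knots. With $N$ the number of maximal knots, every maximal knot other than $K_1,K_2$ lies in $(g_{K_1}\cup g_{K_1}')\cap(g_{K_2}\cup g_{K_2}')$, and neither $K_1$ nor $K_2$ is among these common points. If $\{g_{K_1},g_{K_1}'\}$ and $\{g_{K_2},g_{K_2}'\}$ share no line the intersection has at most four points, so $N\le 6$; if they share a line $g$ then $T_{K_1},T_{K_2}\in g$ and the remaining maximal knots lie on $g$ (at most three, none equal to $K_1,K_2$) together with the single point $g_{K_1}'\cap g_{K_2}'$, again giving $N\le 6$. The main obstacle is the passage from $6$ to the asserted bound $5$: I expect to eliminate $N=6$ by a further incidence chase involving a third base knot, using that each $2$-knot carries only two lines of $\mathcal{L}$ and the per-line bound of three, to show that the extremal configurations (four common points all maximal, respectively three maximal knots on the shared line $g$) are inconsistent.
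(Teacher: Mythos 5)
Everything you actually prove is correct, and your machinery is essentially the paper's: you isolate the unique $2$-knot partner $T_K$ on each line $KP$, show every other maximal knot is joined to $K$ and to $T_K$ by lines of $\mathcal{L}$, deduce that a line carries at most three maximal knots, and intersect the two-line configurations at two base knots. Your ``local structure'' statement (every non-$1$-knot other than $K$ and $T_K$ lies on one of the $s+2$ lines of $\mathcal{L}$ through $K$) is a clean repackaging of the covering argument the paper runs ad hoc for each line $T K_i$, and your derivation of the three-per-line bound from it is slightly tidier than the paper's.

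The problem is that the lemma asserts at most $5$ maximal knots and you only establish at most $6$; the elimination of $N=6$ is exactly the step you defer to ``a further incidence chase,'' and it is the substantive second half of the paper's proof rather than a routine cleanup. The paper's chase: with six maximal knots, each $T_i$ has only two $\mathcal{L}$-lines to reach the other five, so some line $\ell\in\mathcal{L}$ carries three of them, say $K_1,K_2,K_3$; then $T_4,T_5,T_6$ must lie on $\ell$ (otherwise they would need three distinct $\mathcal{L}$-lines to reach $K_1,K_2,K_3$), and the second $\mathcal{L}$-line through $T_i$, $i\in\{4,5,6\}$, is $K_jK_{j'}$ with $\{i,j,j'\}=\{4,5,6\}$; finally the two $\mathcal{L}$-lines through $T_1$ are $T_1K_2$ and $T_1K_3$, so two of $K_4,K_5,K_6$, say $K_4,K_5$, lie on $T_1K_2$, whence $T_6\in K_4K_5=T_1K_2$ and $T_6=T_1K_2\cap\ell=K_2$, a contradiction. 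Until you supply an argument of this kind your proof is incomplete, and the gap is not cosmetic: where the lemma is invoked in the main theorem, the weaker bound $d\le 6$ fails to give a contradiction in the case $i=4$, $q=25$ (the relevant quantity $q\lfloor\sqrt{q}\rfloor-3q-4\lfloor\sqrt{q}\rfloor+6-d(\lfloor\sqrt{q}\rfloor+1)$ drops to $0$ there).
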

\begin{proof}
	We will first prove that there can be at most three $\left(\left\lfloor\sqrt{q}\right\rfloor+2\right)$-knots on a line. Assume that $K_{1},K_{2},K_{3},K_{4}$ are four $\left(\left\lfloor\sqrt{q}\right\rfloor+2\right)$-knots on a line; necessarily this line is in $\mathcal{L}$. The point $P$ is not on $\ell$. On the line $PK_{1}$ there is precisely on 2-knot $T$; the remaining $q-1$ points are 1-knots. Consider the lines $TK_{i}$, $i=2,3,4$. They do not contain the point $P$, and hence, if $TK_i$ is not a line of $\L$, all $q-1$ points, different from $T$ and $K_i$ have to be covered by one of the remaining $q+\left\lfloor\sqrt{q}\right\rfloor+2-(\left\lfloor\sqrt{q}\right\rfloor+2)-2=q-2$ lines of $\L$ not through $T$ and $K_i$ which is impossible. We conclude that the lines $TK_i$, $i=2,3,4$ are lines of $\L$. But this is a contradiction as $T$ is on exactly two lines of $\mathcal{L}$.
	\par We now prove that $d\leq 5$. Let $K_{1},\dots,K_{6}$ be six $\left(\left\lfloor\sqrt{q}\right\rfloor+2\right)$-knots. For any $K_{i}$, $i=1,\dots,6$, we know that there is exactly one 2-knot $T_{i}$ on the line $PK_{i}$. If $i\neq j$, then $T_{i}K_{j}$ is a line not containing $P$, hence, using the same argument as above, $T_iK_j$ is a line of $\mathcal{L}$. As there are only two lines of $\mathcal{L}$ through $T_{i}$, $i=1,\dots,6$, one of them contains at least three $\left(\left\lfloor\sqrt{q}\right\rfloor+2\right)$-knots. So, we know that three of the points $K_{1},\dots,K_{6}$ are collinear, say $K_{1}$, $K_{2}$ and $K_{3}$, and we denote the line they are on by $\ell$. As there are only two lines of $\mathcal{L}$ through a 2-knot we have immediately that $T_{i}\in\ell$ for $i=4,5,6$. Moreover, the other line of $\mathcal{L}$ through $T_{i}$ contains $K_{j}$ and $K_{j'}$ for $\{i,j,j'\}=\{4,5,6\}$. Now, the two lines of $\mathcal{L}$ through $T_{1}$ must be $T_{1}K_{2}$ and $T_{1}K_{3}$. Each of the $\left(\left\lfloor\sqrt{q}\right\rfloor+2\right)$-knots $K_{4}$, $K_{5}$ and $K_{6}$ must be on one of these two lines, so one of these two lines contains at least two of them. Say, without loss of generality that $K_{4}$ and $K_{5}$ are on $T_{1}K_{2}$. However, then $T_{6}$ is also on this line, which implies $T_{6}=K_{2}$ since both points are also on $\ell$, a contradiction.
\end{proof}

\begin{theorem}
	Let $P$ be a point of an axiomatic projective plane $\mathcal{P}$ of order $q$, and let $\mathcal{L}$ be a set of lines in $\mathcal{P}$ such that all points but $P$ are on a line of $\mathcal{L}$, and $P$ is not. If $q\geq25$, then $|\mathcal{L}|\geq q+\left\lfloor\sqrt{q}\right\rfloor+3$.
\end{theorem}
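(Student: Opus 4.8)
The plan is to argue by contradiction: I assume $|\mathcal{L}|\leq q+\lfloor\sqrt q\rfloor+2$ and combine the three preceding lemmas with a counting over the pencil of lines through $P$. Throughout write $s=\lfloor\sqrt q\rfloor$ and $r=q-s^{2}$, so $0\le r\le 2s$ and $s\ge 5$, and for a point $Q$ let $i_{Q}$ denote the number of lines of $\mathcal{L}$ through $Q$. First I would pin down the extremal configuration. Since $q+s+2<2q-1$ for $q\ge 25$, Lemma \ref{eerste} applies and gives $|\mathcal{L}|\ge q+k$, while Lemma \ref{plus1} gives $k\ge s+2$. Hence $q+s+2\ge|\mathcal{L}|\ge q+k\ge q+s+2$, which forces $|\mathcal{L}|=q+s+2$ and $k=s+2$. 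Now Lemma \ref{5superknots} applies and bounds the number $d$ of $(s+2)$-knots by $d\le 5$.

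Next I would evaluate Lemma \ref{Beq} at $\overline{k}=k=s+2$. A direct computation gives the right-hand side
\[
V:=-|\mathcal{L}|(|\mathcal{L}|-1)+(s+2)|\mathcal{L}|(q+1)-(s+2)(q^{2}+q)=-qr+2qs+q+s+2 .
\]
The point is that for $\overline{k}=s+2$ every coefficient $(i-1)(s+2-i)$ is nonnegative, and the $1$-knots and $(s+2)$-knots contribute $0$, so $V=\sum_{Q}(i_{Q}-1)(s+2-i_{Q})$ is a sum of nonnegative point-contributions. The key idea is to group these along the $q+1$ lines through $P$, which partition the points different from $P$. As $P$ is uncovered, no such line lies in $\mathcal{L}$, so on each line $m$ through $P$ one has $\sum_{Q\in m,\,Q\ne P}(i_{Q}-1)=|\mathcal{L}|-q=s+2$.

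Now I would split the pencil at $P$ into two types. If $m$ contains an $(s+2)$-knot $K$, then $K$ already uses up excess $s+1$, the remaining excess $1$ is a single $2$-knot, and the contribution of $m$ is exactly $s$; moreover no line through $P$ can carry two $(s+2)$-knots (that would need excess $\ge 2s+2$), so there are exactly $d$ lines of this type. If $m$ carries no $(s+2)$-knot, then all its knot-values are $\le s+1$, and applying Lemma \ref{afschatting} with $b=s+1$ to $\sum(i-1)a_{i}=s+2$ (where $s+2\equiv 2\pmod{s}$) yields $\sum(s+1-i)(i-1)a_{i}\ge 2(s-2)$; adding back $\sum(i-1)a_{i}=s+2$ shows that $m$ contributes at least $3s-2$. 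Summing over the whole pencil gives
\[
V\ \ge\ d\,s+(q+1-d)(3s-2).
\]

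The right-hand side is decreasing in $d$, so with $d\le 5$ it is at least $(q-4)(3s-2)+5s$. The final step is the arithmetic verification that $V<(q-4)(3s-2)+5s$, which simplifies to $q(s+r-3)>8s-6$; for $q\ge 25$ this follows from $q\ge s^{2}$ and $s\ge 5$, since then $q(s-3)\ge s^{2}(s-3)=s^{3}-3s^{2}>8s-6$. This contradiction proves $|\mathcal{L}|\ge q+s+3$. I expect the main obstacle to be precisely this last inequality in the square-like regime $r=0$ (i.e.\ $q$ a perfect square), where $V$ is largest (of size $\approx 2qs$): there one must both extract the sharper per-line bound $3s-2$ rather than the naive $s$ and invoke $d\le 5$ from Lemma \ref{5superknots}, because neither ingredient on its own closes the gap between $V$ and the pencil lower bound.
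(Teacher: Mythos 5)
Your proof is correct, and it reaches the contradiction by a genuinely different and considerably shorter route than the paper's. Both arguments start the same way: Lemmas \ref{eerste} and \ref{plus1} force $|\mathcal{L}|=q+s+2$ and $k=s+2$ (writing $s=\lfloor\sqrt q\rfloor$), and Lemma \ref{5superknots} gives $d\le 5$. From there the paper evaluates Lemma \ref{Beq} at $\overline{k}=s+1$ and runs a long elimination: for each candidate $i$-knot with $4\le i\le s$ it sums contributions over the pencil through \emph{that knot} to rule such knots out (with a separate treatment of $q=25$, $i=5$), then rules out $3$-knots the same way, and finally, knowing only knot values $1,2,s+1,s+2$ survive, solves the three standard equations for $x_1,x_2,x_{s+1}$ and extracts integrality and divisibility constraints that yield the contradiction. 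You instead evaluate Lemma \ref{Beq} at $\overline{k}=s+2$, which makes every point's contribution nonnegative, and partition the points by the pencil through $P$: each of the $d$ lines carrying an $(s+2)$-knot contributes exactly $s$ (its residual excess $1$ sits on a single $2$-knot), and each of the remaining $q+1-d$ lines contributes at least $2(s-2)+(s+2)=3s-2$ by Lemma \ref{afschatting} applied with $b=s+1$, $k=s+2$, $m=2$ (the extremal line carrying one $(s+1)$-knot and one $3$-knot). I verified the key quantities: $V=(s+2)^2(q+1)-(q+s+2)(q+s+1)=-qr+2qs+q+s+2$, the pencil bound $V\ge(q+1)(3s-2)-d(2s-2)\ge 3qs-2q-7s+8$, and the resulting inequality $q(s+r-3)\le 8s-6$, which indeed fails for $q\ge 25$ since $q(s-3)\ge s^2(s-3)>8s-6$ for $s\ge5$ (e.g.\ $q=25$ gives $V=282$ against a lower bound of $298$). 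Your choice of $\overline{k}=s+2$ and of the pencil through $P$ buys a single uniform estimate in place of the paper's knot-by-knot case analysis and avoids the delicate integrality endgame entirely; what it gives up is the structural by-product the paper obtains (non-existence of intermediate knots and explicit formulas for $x_1,x_2,x_{s+1}$), and, as you correctly note, neither the per-line bound $3s-2$ nor $d\le5$ alone would close the gap in the square case $r=0$, so all three preparatory lemmas are still genuinely needed.
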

\begin{proof}
	By Lemma \ref{eerste}, we have that $|\mathcal{L}|\geq q+\left\lfloor\sqrt{q}\right\rfloor+3$ if $k\geq\left\lfloor\sqrt{q}\right\rfloor+3$. By Lemma \ref{plus1}, $k>\left\lfloor\sqrt{q}\right\rfloor+1$. So it follows that we only need to rule out the case $k=\left\lfloor\sqrt{q}\right\rfloor+2$ in order to obtain that $|\mathcal{L}|\geq q+\left\lfloor\sqrt{q}\right\rfloor+3$.
	\par If $k=\left\lfloor\sqrt{q}\right\rfloor+2$, then we know $|\mathcal{L}|\geq q+\left\lfloor\sqrt{q}\right\rfloor+2$ and we may assume that $|\mathcal{L}|=q+\left\lfloor\sqrt{q}\right\rfloor+2$. We set $d=x_{\left\lfloor\sqrt{q}\right\rfloor+2}$ for ease of notation. It then follows from Lemma \ref{Beq} that
	\begin{align}\label{Beq1}
		\sum_{j=1}^{\left\lfloor\sqrt{q}\right\rfloor+1}(j-1)(\left\lfloor\sqrt{q}\right\rfloor+1-j)x_{j}=q\left(\left\lfloor\sqrt{q}\right\rfloor-1\right)-q\left(q-\left\lfloor\sqrt{q}\right\rfloor^{2}\right)+d(\left\lfloor\sqrt{q}\right\rfloor+1)\;.
	\end{align}
	Now, let $K$ be an $i$-knot with $4\leq i\leq\left\lfloor\sqrt{q}\right\rfloor$, let $\ell\notin\mathcal{L}$ be a line through $K$ and denote the number of $j$-knots on $\ell$ different from $K$ and $P$ by $a_{j}$. Then we have $\sum_{j=1}^{\left\lfloor\sqrt{q}\right\rfloor+2}ja_{j}=q+\left\lfloor\sqrt{q}\right\rfloor+2-i$ by counting the tuples $(Q,n)$ with $\{Q\}=\ell\cap n$, $Q\neq K$ and $n\in\mathcal{L}$. We also have $\sum_{j=1}^{\left\lfloor\sqrt{q}\right\rfloor+2}a_{j}=q-\delta$ where $\delta=0$ if $P\notin\ell$ and $\delta=1$ if $P\in\ell$. We find  $\sum_{j=1}^{\left\lfloor\sqrt{q}\right\rfloor+2}(j-1)a_{j}=\left\lfloor\sqrt{q}\right\rfloor+2-i+\delta$. Moreover, as $\left\lfloor\sqrt{q}\right\rfloor+2-i+\delta\leq\left\lfloor\sqrt{q}\right\rfloor-1$, we have $\sum_{j=1}^{\left\lfloor\sqrt{q}\right\rfloor+1}(j-1)a_{j}=\left\lfloor\sqrt{q}\right\rfloor+2-i+\delta$. So we find that the points on $\ell\neq KP$ different from $K$ contribute at least $\left(\left\lfloor\sqrt{q}\right\rfloor+2-i\right)(i-2)$ to the sum in \eqref{Beq1} by Lemma \ref{afschatting}; the points on $KP$ different from $K$ contribute at least $\left(\left\lfloor\sqrt{q}\right\rfloor+3-i\right)(i-3)$ to the sum in \eqref{Beq1}. None of these contributions includes the point $K$, but $K$ itself contributes $\left(\left\lfloor\sqrt{q}\right\rfloor+1-i\right)(i-1)$ to the sum in \eqref{Beq1}. We find that
	\begin{align}\label{Beq2pro}
		q\left(\left\lfloor\sqrt{q}\right\rfloor-1\right)-q\left(q-\left\lfloor\sqrt{q}\right\rfloor^{2}\right)+d(\left\lfloor\sqrt{q}\right\rfloor+1)&=\sum_{j=1}^{\left\lfloor\sqrt{q}\right\rfloor+1}(j-1)\left(\left\lfloor\sqrt{q}\right\rfloor+1-j\right)x_{j}\nonumber\\
		&\geq (q-i)\left(\left\lfloor\sqrt{q}\right\rfloor+2-i\right)(i-2)\nonumber\\&\ \quad+\left(\left\lfloor\sqrt{q}\right\rfloor+3-i\right)(i-3)+\left(\left\lfloor\sqrt{q}\right\rfloor+1-i\right)(i-1)\;.
	\end{align}
	Hence,
	\begin{align}\label{Beq2}
		0&\geq i^{3}-(q+\left\lfloor\sqrt{q}\right\rfloor+6)i^{2}+(q\left\lfloor\sqrt{q}\right\rfloor+4q+4\left\lfloor\sqrt{q}\right\rfloor+12)i-d(\left\lfloor\sqrt{q}\right\rfloor+1)\nonumber\\&\qquad+q^{2}-q\left\lfloor\sqrt{q}\right\rfloor^{2}-3q\left\lfloor\sqrt{q}\right\rfloor-3q-4\left\lfloor\sqrt{q}\right\rfloor-10\;.
	\end{align}
	For $i=4$ we find, using that $d\leq 5$ from Lemma \ref{5superknots},
	\[
		0\geq q^{2}-q\left\lfloor\sqrt{q}\right\rfloor^{2}+q\left\lfloor\sqrt{q}\right\rfloor-3q -4\left\lfloor\sqrt{q}\right\rfloor+6-d\left(\left\lfloor\sqrt{q}\right\rfloor+1\right)\geq q\left\lfloor\sqrt{q}\right\rfloor-3q -9\left\lfloor\sqrt{q}\right\rfloor+1\;,
	\]
	which is a contradiction for $q\geq25$. For $i=\left\lfloor\sqrt{q}\right\rfloor$ we find, again using that $d\leq 5$ from Lemma \ref{5superknots},
	\begin{align*}
		0&\geq q^{2}-q\left\lfloor\sqrt{q}\right\rfloor^{2}+q\left\lfloor\sqrt{q}\right\rfloor-3q-2\left\lfloor\sqrt{q}\right\rfloor^{2}+8\left\lfloor\sqrt{q}\right\rfloor-10-d\left(\left\lfloor\sqrt{q}\right\rfloor+1\right)\\
		&\geq q\left\lfloor\sqrt{q}\right\rfloor-3q-2\left\lfloor\sqrt{q}\right\rfloor^{2}+3\left\lfloor\sqrt{q}\right\rfloor-15\;,
	\end{align*}
	which is a contradiction for $q>25$. Since the right hand side of \eqref{Beq2} is function of degree 3 in $i$ it can easily be checked that it is first increasing and then decreasing on the interval $[4,\left\lfloor\sqrt{q}\right\rfloor]$ for $q\geq25$. So, the minimum of this function on this interval is to be found at one of its endpoints. So, as we have a contradiction for $i=4$ and $i=\left\lfloor\sqrt{q}\right\rfloor$, we have a contradiction for all $i$ with $4\leq i\leq\left\lfloor\sqrt{q}\right\rfloor$.
	\par For $q=25$ we did not yet exclude $i=5$. We find a contradiction in \eqref{Beq2} if $0\leq d\leq 4$, and we have equality in \eqref{Beq2} if $d=5$. In the latter case we also have equality in \eqref{Beq2pro} and hence the points on the lines of $\mathcal{L}$ through the $5$-knot $K$ should all contribute zero to the sum in \eqref{Beq1}. If $\ell\in\mathcal{L}$ through $K$ we have
	\[
		\sum_{j=1}^{7}(j-1)a_{j}=q+\left\lfloor\sqrt{q}\right\rfloor+2-i=27
	\]
	by counting the tuples $(Q,n)$ with $\{Q\}=\ell\cap n$, $Q\neq K$ and $n\in\mathcal{L}\setminus\{\ell\}$. Hence, not all points on $\ell$ different from $K$ can be 1-knots or 6-knots; more precisely, there should be at least two 7-knots on $\ell$. It follows that the total number of 7-knots is at least 10, a contradiction.
	\par So far we have concluded that $x_{i}=0$ for all $i$ with $4\leq i\leq\left\lfloor\sqrt{q}\right\rfloor$. Now, let $K'$ be a 3-knot, let $\ell\notin\mathcal{L}$ be a line through $K'$ different from $K'P$ and denote the number of $j$-knots on $\ell$ different from $K'$ by $a_{j}$. Counting the tuples $(Q,n)$ with $\{Q\}=\ell\cap n$, $Q\neq K'$ and $n\in\mathcal{L}$ and using $\sum_{j=1}^{\left\lfloor\sqrt{q}\right\rfloor+2}a_{j}=q$ we find $\sum_{j=1}^{\left\lfloor\sqrt{q}\right\rfloor+2}(j-1)a_{j}=\left\lfloor\sqrt{q}\right\rfloor-1$, but as $a_{j}=0$ for all $j=4,\dots,\left\lfloor\sqrt{q}\right\rfloor$, we find that
	\[
		a_{2}+2a_{3}+\left\lfloor\sqrt{q}\right\rfloor a_{\left\lfloor\sqrt{q}\right\rfloor+1}+\left(\left\lfloor\sqrt{q}\right\rfloor+1\right)a_{\left\lfloor\sqrt{q}\right\rfloor+2}=\left\lfloor\sqrt{q}\right\rfloor-1
	\]
	and hence that $a_{\left\lfloor\sqrt{q}\right\rfloor+1}=a_{\left\lfloor\sqrt{q}\right\rfloor+2}=0$. So, the points on $\ell$ different from $K$ contribute
	\begin{align*}
		a_{2}\left(\left\lfloor\sqrt{q}\right\rfloor-1\right)+a_{3}\left(2\left(\left\lfloor\sqrt{q}\right\rfloor-2\right)\right)&=a_{2}\left(\left\lfloor\sqrt{q}\right\rfloor-1\right)+\frac{\left\lfloor\sqrt{q}\right\rfloor-1-a_{2}}{2}\left(2\left(\left\lfloor\sqrt{q}\right\rfloor-2\right)\right)\\
		&=a_{2}+\left(\left\lfloor\sqrt{q}\right\rfloor-1\right)\left(\left\lfloor\sqrt{q}\right\rfloor-2\right)
	\end{align*}
	hence at least $\left(\left\lfloor\sqrt{q}\right\rfloor-1\right)\left(\left\lfloor\sqrt{q}\right\rfloor-2\right)$ to the sum in \eqref{Beq1}. The point $K$ itself also contributes $2\left(\left\lfloor\sqrt{q}\right\rfloor-2\right)$ to this sum. We find 
	\begin{align*}
		q\left(\left\lfloor\sqrt{q}\right\rfloor-1\right)-q\left(q-\left\lfloor\sqrt{q}\right\rfloor^{2}\right)+d\left(\left\lfloor\sqrt{q}\right\rfloor+1\right)&=\sum_{j=1}^{\left\lfloor\sqrt{q}\right\rfloor+1}(j-1)\left(\left\lfloor\sqrt{q}\right\rfloor+1-j\right)x_{j}\\
		&\geq (q-3)\left(\left\lfloor\sqrt{q}\right\rfloor-1\right)\left(\left\lfloor\sqrt{q}\right\rfloor-2\right)+2\left(\left\lfloor\sqrt{q}\right\rfloor-2\right)\\
		&=q\left\lfloor\sqrt{q}\right\rfloor^{2}-3q\left\lfloor\sqrt{q}\right\rfloor+2q-3\left\lfloor\sqrt{q}\right\rfloor^{2}\\&\qquad+11\left\lfloor\sqrt{q}\right\rfloor-10\;.
	\end{align*}
	So, we have, using Lemma \ref{5superknots}, that
	\[
		0\geq q^{2}-4q\left\lfloor\sqrt{q}\right\rfloor+3q-3\left\lfloor\sqrt{q}\right\rfloor^{2}+11\left\lfloor\sqrt{q}\right\rfloor-10-d\left(\left\lfloor\sqrt{q}\right\rfloor+1\right)\geq q^{2}-4q\left\lfloor\sqrt{q}\right\rfloor+6\left\lfloor\sqrt{q}\right\rfloor-15\;,
	\]
	which is a contradiction for $q\geq16$.
	\par So far, we know that $x_{i}=0$ for all $i\notin\{1,2,\left\lfloor\sqrt{q}\right\rfloor+1,\left\lfloor\sqrt{q}\right\rfloor+2\}$. We call the $\left(\left\lfloor\sqrt{q}\right\rfloor+1\right)$-knots and the $\left(\left\lfloor\sqrt{q}\right\rfloor+2\right)$-knots {\em big knots}. It follows from the standard countings that
	\begin{align*}
		&\left\{\begin{aligned}
		x_{1}+x_{2}+x_{\left\lfloor\sqrt{q}\right\rfloor+1}&=q^{2}+q-d\\
		x_{1}+2x_{2}+\left(\left\lfloor\sqrt{q}\right\rfloor+1\right)x_{\left\lfloor\sqrt{q}\right\rfloor+1}&=(q+1)\left(q+\left\lfloor\sqrt{q}\right\rfloor+2\right)-\left(\left\lfloor\sqrt{q}\right\rfloor+2\right)d\\
		2x_{2}+\left(\left\lfloor\sqrt{q}\right\rfloor+1\right)\left\lfloor\sqrt{q}\right\rfloor x_{\left\lfloor\sqrt{q}\right\rfloor+1}&=\left(q+\left\lfloor\sqrt{q}\right\rfloor+2\right)\left(q+\left\lfloor\sqrt{q}\right\rfloor+1\right)-\left(\left\lfloor\sqrt{q}\right\rfloor+2\right)\left(\left\lfloor\sqrt{q}\right\rfloor+1\right)d
		\end{aligned}\right.\\
		\Leftrightarrow\quad&\left\{\begin{aligned}
		x_{1}&=q^{2}-q\left\lfloor\sqrt{q}\right\rfloor-q-d-1+\frac{q^{2}-q-2}{\left\lfloor\sqrt{q}\right\rfloor}\\
		x_{2}&=q\left\lfloor\sqrt{q}\right\rfloor+2q+d-\frac{q^{2}-q-2d}{\left\lfloor\sqrt{q}\right\rfloor-1}\\
		x_{\left\lfloor\sqrt{q}\right\rfloor+1}&=1-d+\frac{q^{2}-q-2(d-1)\left\lfloor\sqrt{q}\right\rfloor-2}{\left\lfloor\sqrt{q}\right\rfloor\left(\left\lfloor\sqrt{q}\right\rfloor-1\right)}
		\end{aligned}\right.
	\end{align*}
	Since $x_{1}$ is an integer, there must be an integer $a$ such that $q^{2}-q-2=a\left\lfloor\sqrt{q}\right\rfloor$. Since $x_{2}$ is an integer too,  $q^2-q-2d=a\left\lfloor\sqrt{q}\right\rfloor-2(d-1)\equiv0\pmod{\left\lfloor\sqrt{q}\right\rfloor-1}$. So, there must be an integer $b$ such that $a=2(d-1)+b(\left\lfloor\sqrt{q}\right\rfloor-1)$.
	We find that
	\begin{align*}
		\left\{\begin{aligned}
		x_{1}&=q^{2}-q\left\lfloor\sqrt{q}\right\rfloor-q+b\left\lfloor\sqrt{q}\right\rfloor+d-3-b\\
		x_{2}&=q\left\lfloor\sqrt{q}\right\rfloor+2q-d-b\left\lfloor\sqrt{q}\right\rfloor+2\\
		x_{\left\lfloor\sqrt{q}\right\rfloor+1}&=1-d+b	\end{aligned}\right.
	\end{align*}
	By counting the tuples $(Q,n)$ with $\{Q\}=\ell\cap n$, $Q\neq P$ and $n\in\mathcal{L}$ and using $\sum_{j=1}^{\left\lfloor\sqrt{q}\right\rfloor+2}a_{j}=q$ we find that for a line $\ell\ni P$ (necessarily not in $\mathcal{L}$) we have that $\sum_{j=1}^{\left\lfloor\sqrt{q}\right\rfloor+2}(j-1)a_{j}=\left\lfloor\sqrt{q}\right\rfloor+2$ with $a_{j}$ the number of $j$-knots on $\ell$. We see that there can be at most 1 big knot on a line through $P$. Hence, $d+x_{\left\lfloor\sqrt{q}\right\rfloor+1}$, the number of big knots, is at most $q+1$. It follows that $b\leq q$. Consequently,
	\[
		q^{2}-q-2=\left\lfloor\sqrt{q}\right\rfloor\left(2(d-1)+b(\left\lfloor\sqrt{q}\right\rfloor-1)\right)\leq q\left\lfloor\sqrt{q}\right\rfloor^{2}-q\left\lfloor\sqrt{q}\right\rfloor+2(d-1)\left\lfloor\sqrt{q}\right\rfloor\;.
	\]
	However, then, using that $d\leq 5$ from Lemma \ref{5superknots},
	\begin{align*}
		0\geq q^{2}-q\left\lfloor\sqrt{q}\right\rfloor^{2}+q\left\lfloor\sqrt{q}\right\rfloor-q-2(d-1)\left\lfloor\sqrt{q}\right\rfloor-2&\geq \left(q^{2}-q\left\lfloor\sqrt{q}\right\rfloor^{2}\right)+q\left(\left\lfloor\sqrt{q}\right\rfloor-1\right)-8\left\lfloor\sqrt{q}\right\rfloor-2\\
		&\geq q\left(\left\lfloor\sqrt{q}\right\rfloor-1\right)-8\left\lfloor\sqrt{q}\right\rfloor-2\;,
	\end{align*}
	which is a contradiction for $q\geq10$.		
\end{proof}

\begin{corollary}
	A blocking set of an affine plane of order $q$ contains at least $q+\left\lfloor\sqrt{q}\right\rfloor+3$ points if $q\geq25$.
\end{corollary}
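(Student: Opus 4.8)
The plan is to deduce the corollary from the preceding theorem by passing to the dual plane; no new combinatorial work is needed, only a careful translation of the hypotheses through completion and duality.

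First I would fix an affine plane $\A$ of order $q\geq 25$ and a blocking set $B$ of $\A$. Completing $\A$ to a projective plane $\Pi$ of order $q$ by adjoining the line at infinity $\ell_{\infty}$ (whose points are the parallel classes of $\A$), I note that $B$ consists only of affine points, so $B\cap\ell_{\infty}=\emptyset$, while every line of $\Pi$ other than $\ell_{\infty}$ restricts to an affine line and hence meets $B$. Thus $B$ is a set of points meeting every line of $\Pi$ except $\ell_{\infty}$, and meeting $\ell_{\infty}$ not at all.

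Next I would dualize. By the duality principle the dual plane $\Pi^{*}$ is again an axiomatic projective plane of order $q$. Under the duality the points of $B$ become a set $\mathcal{L}$ of $|B|$ lines of $\Pi^{*}$ and the line $\ell_{\infty}$ becomes a point $P$; the two incidence statements above translate, respectively, into ``every point of $\Pi^{*}$ other than $P$ lies on a line of $\mathcal{L}$'' and ``$P$ lies on no line of $\mathcal{L}$'' (this is exactly the content of the lemma on the dual of an affine blocking set). Applying the preceding theorem to $\Pi^{*}$, $P$ and $\mathcal{L}$, and using $q\geq 25$, yields $|\mathcal{L}|\geq q+\lfloor\sqrt{q}\rfloor+3$. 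Since $|\mathcal{L}|=|B|$, the desired bound $|B|\geq q+\lfloor\sqrt{q}\rfloor+3$ follows at once.

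There is no substantive obstacle here: the entire difficulty of the result was already absorbed into the preceding theorem and its supporting lemmas. The only points that require (routine) care are verifying that the completion of an arbitrary---possibly non-desarguesian---affine plane of order $q$ is a projective plane of order $q$, and that the dual of an axiomatic projective plane is again one of the same order, so that the theorem is genuinely applicable to $\Pi^{*}$; both are standard facts about axiomatic planes.
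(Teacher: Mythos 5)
Your proposal is correct and is exactly the argument the paper intends: the paper proves the main theorem in the dual setting and derives the corollary via completion and dualization, just as you do. The translation of hypotheses (that $B$ misses $\ell_{\infty}$ and meets every other line of the completion, hence dually $\mathcal{L}$ covers every point except $P$) is handled correctly.
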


\paragraph*{Acknowledgement:} This research was performed when the first author was visiting the School of Mathematics and Statistics at the University of Canterbury. He wants to thank the School, and in particular the second author, for their hospitality.

\noindent Maarten De Boeck\\
Universiteit Gent\\
Vakgroep Wiskunde\\
Krijgslaan 281--S25\\
B--9000 Gent\\
Flanders, Belgium\\
maarten.deboeck@ugent.be\\

\noindent Geertrui Van de Voorde\\
University of Canterbury\\
School of Mathematics and Statistics\\
Private Bag 4800\\
8140 Christchurch\\
New Zealand\\
geertrui.vandevoorde@canterbury.ac.nz

\end{document}